\newcommand{\R}{{\mathbb  R}}
\newcommand{\kf}[2]{{\begin{pmatrix} #1 \\ #2 \end{pmatrix}}}
\newcommand{\kg}{{\begin{pmatrix} f \\ g \end{pmatrix}}}
\newcommand{\D}{{\mathbb  D}}
\newcommand{\T}{\mathbb{T}}
\newcommand{\Z}{{\mathbb  Z}}
\newcommand{\N}{{\mathbb  N}}
\newcommand{\C}{{\mathbb  C}}
\newcommand{\OZ}{{\mathbf{O}}}
\newcommand{\ID}{{\mathbf{1}}}
\newcommand{\OID}{{\mathbf{I}}}
\newcommand{\fdot}{\,\cdot\,}
\newcommand{\calC}{{\mathcal C}}
\newcommand{\cH}{\mathcal{H}}
\newcommand{\cD}{\mathcal{D}}
\newcommand{\te}{\theta}
\newcommand{\f}{\varphi}
\newcommand{\e}{\varepsilon}
\newcommand{\Kft}{{\begin{pmatrix}H^2(\mathcal{E}_\ast)\\\clos\bigtriangleup L^2(\mathcal{E})\end{pmatrix}
\ominus \begin{pmatrix}\Theta\\\bigtriangleup\end{pmatrix} H^2(\mathcal{E})}}
\newcommand{\kr}{{\begin{pmatrix} \te \\ \bigtriangleup \end{pmatrix}}}
\DeclareMathOperator{\clos}{clos}
\DeclareMathOperator{\supp}{supp}
\newcommand{\ci}[1]{_{ {}_{\scriptstyle #1}}}
\newcommand{\ti}[1]{_{\scriptstyle \text{\rm #1}}}
\chardef\mathlig@atcode\count255
\def\actively#1#2{\begingroup\uccode`\~=`#2\relax\uppercase{\endgroup#1~}}
\def\mathlig@gobble{\afterassignment\mathlig@next@cmd\let\mathlig@next= }
\def\mathlig@delim{\mathlig@delim}
\def\mathlig@defcs#1{\expandafter\def\csname#1\endcsname}
\def\mathlig@let@cs#1#2{\expandafter\let\expandafter#1\csname#2\endcsname}
\def\mathlig@appendcs#1#2{\expandafter\edef\csname#1\endcsname{\csname#1\endcsname#2}}
\def\mathlig#1#2{\mathlig@checklig#1\mathlig@end\mathlig@defcs{mathlig@back@#1}{#2}\ignorespaces}
\def\mathlig@checklig#1#2\mathlig@end{%
 \expandafter\ifx\csname mathlig@forw@#1\endcsname\relax
 \expandafter\mathchardef\csname mathlig@back@#1\endcsname=\mathcode`#1%
 \mathcode`#1"8000\actively\def#1{\csname mathlig@look@#1\endcsname}%
 \mathlig@dolig#1\mathlig@delim
\fi
\mathlig@checksuffix#1#2\mathlig@end
}
\def\mathlig@checksuffix#1#2\mathlig@end{%
\ifx\mathlig@delim#2\mathlig@delim\relax\else\mathlig@checksuffix@{#1}#2\mathlig@end\fi
}
\def\mathlig@checksuffix@#1#2#3\mathlig@end{%
\expandafter\ifx\csname mathlig@forw@#1#2\endcsname\relax\mathlig@dosuffix{#1}{#2}\fi
\mathlig@checksuffix{#1#2}#3\mathlig@end
}
\def\mathlig@dosuffix#1#2{%
\mathlig@appendcs{mathlig@toks@#1}{#2}%
\mathlig@dolig{#1}{#2}\mathlig@delim
}
\def\mathlig@dolig#1#2\mathlig@delim{%
 \mathlig@defcs{mathlig@look@#1#2}{%
 \mathlig@let@cs\mathlig@next{mathlig@forw@#1#2}\futurelet\mathlig@next@tok\mathlig@next}%
 \mathlig@defcs{mathlig@forw@#1#2}{%
  \mathlig@let@cs\mathlig@next{mathlig@back@#1#2}%
  \mathlig@let@cs\checker{mathlig@chck@#1#2}%
  \mathlig@let@cs\mathligtoks{mathlig@toks@#1#2}%
  \expandafter\ifx\expandafter\mathlig@delim\mathligtoks\mathlig@delim\relax\else
  \expandafter\checker\mathligtoks\mathlig@delim\fi
  \mathlig@next
 }%
 \mathlig@defcs{mathlig@toks@#1#2}{}%
 \mathlig@defcs{mathlig@chck@#1#2}##1##2\mathlig@delim{%
  \ifx\mathlig@next@tok##1%
   \mathlig@let@cs\mathlig@next@cmd{mathlig@look@#1#2##1}\let\mathlig@next\mathlig@gobble
  \fi
  \ifx\mathlig@delim##2\mathlig@delim\relax\else
   \csname mathlig@chck@#1#2\endcsname##2\mathlig@delim
  \fi
 }%
%
 \ifx\mathlig@delim#2\mathlig@delim\else
  \mathlig@defcs{mathlig@back@#1#2}{\csname mathlig@back@#1\endcsname #2}%
 \fi
}%
\mathchardef\ordinarycolon\mathcode`\:
\def\vcentcolon{\mathrel{\mathop\ordinarycolon}}
\numberwithin{equation}{section}
\theoremstyle{plain}
\newtheorem{theo}{Theorem}[section]
\newtheorem{lem}[theo]{Lemma}
\theoremstyle{definition}
\theoremstyle{remark}
\newtheorem*{ex*}{Example}
\theoremstyle{remark}
\newtheorem*{exs*}{Examples}
\theoremstyle{remark}
\newtheorem*{rem}{Remark}
\theoremstyle{remark}
\newtheorem*{rems}{Remarks}
\title[Rank one and finite rank perturbations]{Rank one and finite rank perturbations}
\author{Constanze~Liaw}
\thanks{The author is supported by the NSF grant DMS-1101477.}
\address{Department of Mathematics, Texas A\&M University, Mailstop 3368,      
 College Station, TX  77843, USA}
\email{conni@math.tamu.edu}
\urladdr{http://www.math.tamu.edu/$\sim$conni}
\keywords{Finite rank perturbations, rank one perturbations, model theory, normalized Cauchy transform}
 \subjclass[2010]{44A15, 47A10, 47A20, 47A55}
\begin{document}

\begin{abstract}
We survey the relationships of rank one self-adjoint and unitary perturbations as well as finite rank unitary perturbations with various branches of analysis and mathematical physics. We include the case of non-inner characteristic operator functions. For rank one perturbations and non-inner characteristic functions, we prove a representation formula for the adjoint of the Clark operator. Throughout we mention many open problems at varying levels of difficulty.
\end{abstract}

\maketitle

\section{Introduction}
In general, perturbation theory is concerned with the following question: Given an operator $A$ what can we say about the properties of an operator $A+B$? The statements vary, depending on which operator class $B$ is taken from.
In this survey we take the perturbation $B$ from very restricted operator classes: Rank one or finite rank operators. Although all of classical perturbation theory (e.g.~Weyl--von Neumann and Kato--Rosenblum) can be applied, rank one perturbations gave rise to a surprisingly rich theory with the major difficulty being the instability of the singular part.

Self-adjoint rank one perturbations occurred naturally in mathematical physics \cite{weyl}. The study of rank one unitary perturbations was initiated by Clark \cite{Clark}, and many deep results were proved by Aleksandrov (see \cite{poltsara2006} and the references within). Their interest in the subject likely originated from the invariant subspace problem.

Within analysis many fruitful connections with holomorphic composition operators, rigid functions and the Nehari interpolation problem have been discovered and studied, see for example, \cite{poltsara2006}. Further the problem is connected to the invariant subspace problem, see for example, page 192ff of \cite{cimaross}. Recently, Jury \cite{Jury} computed the asymptotic symbols of a certain class of weakly asymptotic Toeplitz operators in terms of the Aleksandrov--Clark measures which occur in the context of rank one perturbations.

In mathematical physics, rank one and finite rank perturbations are of interest for several reasons. First of all, rank one perturbations are used in some results on orthogonal polynomials, and they are related to free probability.
Further, a change in the boundary condition of a limit-point half-line Schr\"odinger operator from Dirichlet to Neumann, or to mixed conditions, can be reformulated as adding a rank one perturbation (see for example \cite{SIMREV}). They, moreover, have connections with Anderson models.
They are related to the famous conjecture about Anderson localization \cite{And1958}. While many specializations of this conjecture were studied in literature and the field is still very active (see e.g.~\cite{AizMol1993, denseGdelta, FrSp, Germ, Klo2}, also see \cite{Kir95, Ban09} for a recent account of parts of the field), one can roughly summarize the {\bf conjecture} as follows: Arbitrarily small random impurities in a crystal will cause spatial localization of electrons.

Rank one unitary perturbations were generalized to higher rank unitary perturbations (for example, in \cite{Ball1978, AlexKapustin}), which describe the situation when the perturbed operator is not cyclic. While rank one perturbations occur when considering the change in boundary condition for the discrete Schr\"odinger operator on $l^2(\N)$, a mathematical physics interpretation of rank $n$ perturbations can be interpreted as a change in boundary conditions for a discrete Schr\"odinger operator on  $l^2(\N^n)$.

It is worth mentioning that thinking of a rank two perturbation as a rank one perturbation of a rank one perturbation is usually not insightful beyond the realms of classical perturbation theory, the reason being that we do not know enough about the spectral measure of the first rank one perturbation.

Here finite rank perturbations are investigated in three settings: (a) In formulating the problem on an abstract Hilbert space, we can apply classical perturbation theory. (b) The subtleties of this very restrictive perturbation problem can be captured conveniently in its spectral representation, making the spectral measure the main object of interest. (c) The third perspective to finite rank unitary perturbations involves model and dilation theory, providing a rather geometric point of view. We mostly restrict the model theoretic part of this survey to the model theory of Sz.-Nagy and Foia\c s.

In Section \ref{s-SA} we introduce self-adjoint rank one perturbations and describe some preliminary results often referred to as Aronszajn--Donoghue theory. In Section \ref{s-EXA} we provide a glimpse at some of the many examples of ``mixed" spectral measures. We then (Section \ref{s-AndHam}) briefly explain the Simon--Wolff criterion and other connections with Anderson models.
In Section \ref{s-GenMod} we introduce the relationship between Sz.-Nagy and Foia\c s model theory and finite rank unitary perturbations of a completely non-unitary contraction.
A generalization of Clark's construction to the case of non-inner characteristic functions is described in Section \ref{s-ACmeas}. Further, we introduce the normalized Cauchy transform, a key object of Aleksandrov--Clark theory, and explain what is known about its role in the case of this more general construction. A representation of the adjoint of the Clark operator in the case of non-inner characteristic function is given in Theorem \ref{t-repr} (and proved in the appendix, Section \ref{s-appendix}).
The focus of Section \ref{s-pseudo} lies on pseudocontinuation. We describe a way to generalize the result of Douglas--Shapiro--Shields to non-inner characteristic functions \cite{Kapustin2} and the problems that arise when considering operator-valued characteristic functions \cite{AlexKapustin}.
In Section \ref{s-last} we mention some other model theoretic approaches to rank one perturbations.

This survey does not include all the material related to the theory of rank one perturbations. For example, many interesting connections were covered in \cite{cimaross, poltsara2006, SIMREV}.

{\bf Acknowledgement.}
I would like to thank R.~G.~Douglas for the many fruitful discussions about the model theory and for proof-reading parts of this paper.

\section{Self-adjoint rank one perturbations}\label{s-SA}
Let $A$ be a self-adjoint (possibly unbounded) cyclic operator on a Hilbert space $\mathcal H$, and let $\f$ be a cyclic vector for $A$. Consider the family of rank-one perturbations formally given by
\begin{align}\label{d-SA}
A_\alpha = A+\alpha(\,\cdot\,,\f)\f.
\end{align}

\begin{rem}Here, if the operator $A$ is bounded, then $\f$ is a vector in $\mathcal H$. For unbounded $A$, we consider the wider class of ``singular form-bounded" perturbations where we assume $\varphi\in\mathcal H_{-1}(A)\supset \mathcal H$, where $\mathcal H_{-r}(A):=\clos{\cH}$, $r>0$, with respect to the norm $\| \cdot\|\ci{\cH_{-r}(A)} = \| (I +|A|)^{-r/2}\cdot\|\ci{\cH}$. In particular, the perturbation $\alpha (\fdot, \f)\f$ can be unbounded (see \cite{KL, mypaper} and the references within for further details). For $r\ge2$, however, the formal expression \eqref{d-SA} does not uniquely determined a self-adjoint operator: For fixed $\alpha$ there is a family of self-adjoint operators corresponding to \eqref{d-SA}.
\end{rem}

The vector $\f$ is cyclic for $A_\alpha$, since it is cyclic for $A$. Let $\mu_\alpha$ denote the spectral measure of the operator $A_\alpha$ with respect to $\f$.

The Weyl--von Neumann Theorem asserts the invariance of the essential spectrum under compact perturbations (see, for example, \cite{katobook}). Therefore, we have $\sigma\ti{ess}(A_\alpha) = \sigma\ti{ess}(A_\beta)$. Further, the Kato--Rosenblum Theorem states that the absolutely continuous part of the spectral measure remains invariant up to equivalence under trace class perturbations (see, for example, \cite{katobook, SIMREV}). It is worth mentioning that Carey and Pincus \cite{CP} provide a complete characterization in terms of the operators' spectrum of when two operators are unitarily equivalent up to a trace class perturbation.

It was a surprise when Donoghue \cite{Donoghue} discovered an example of a rank one perturbation under which the spectral type was unstable. The subject developed into Aronszajn--Donoghue theory (see e.g.~\cite{SIMREV}). A complete characterization of the absolutely continuous and the pure point parts of the perturbed operators' spectral measure in terms of that of the unperturbed operator. Namely, we have
\[
(d\mu_\alpha)\ti{ac} (x) = \pi^{-1} \lim\limits_{\e\downarrow 0} \left[ \frac{\text{Im} (F(x+i\e))}{|1+\alpha F(x+i\e)|^2}\right],
\]
where $F(x+i\e) = \int_\R \frac{d\mu(t)}{t-(x+i\e)}$ and $\text{Im} (z)$ refers to the imaginary part of $z\in \C$. It is not hard to see that $(\mu_\alpha)\ti{ac}$ and  $(\mu_\beta)\ti{ac}$, $\alpha, \beta \in \R$, are equivalent measures (i.e.~they are mutually absolutely continuous in the sense of Radon--Nikodym). The operator $A_\alpha$ has an eigenvalue at $x$ if and only if the two conditions $\lim\limits_{\e\downarrow 0} F(x+i\e) = -\alpha^{-1}$ and $G(x)<\infty$, $G(x) :=\int_\R\frac{d\mu(t)}{(t-x)^2}$, are satisfied. In fact, we have
\[
(d\mu_\alpha)\ti{pp}(x)  = \sum\limits_{x_n \text{ is eigenvalue}} \frac{\delta(x-x_n)}{\alpha^2 G(x_n)}\,.
\]
Further, it is not hard to see that the singular parts $(\mu_\alpha)\ti{s}$ and  $(\mu_\beta)\ti{s}$, $\alpha, \beta \in \R$ are mutually singular for $\alpha\neq\beta$.

The behavior of the singular continuous spectrum has proved more difficult to capture. {\bf Open problem:} Characterize the singular continuous part of the perturbed operator's spectral measure in terms of the unperturbed operator.

Several sufficient conditions for the absence of singular continuous spectrum are known (see, for example, \cite{cimaross, mypaper, SIMREV}). Based on a relationship between rank one perturbations and the two weight problem for the Hilbert transform (which arises when considering the spectral representation of the perturbed operator $A_\alpha$) one of the sufficient conditions \cite{mypaper} for the absence of the singular spectrum says: Operators $A_\alpha$, $\alpha\neq 0$ have a pure absolutely continuous spectrum on a closed interval $I$, if
\begin{align}\label{e-cond}
\int_0^\e x^{-2} w^\ast_I dx =\infty.
\end{align}
Here $d\mu = w dx+ d\mu\ti{s}$, $w\in L^1(dx)$, and $w^\ast_I$ denotes the increasing rearrangement of $w$ on $I$.
The paper includes a construction of unperturbed operators $A$ with arbitrary embedded singular spectrum and for which all of the perturbed operators $A_\alpha$, $\alpha\neq 0$ have no embedded singular spectrum. This construction is one of many examples.

\section{Instability of the singular spectrum}\label{s-EXA}
Since the example of Donoghue showed that the type of the singular spectrum can change under rank one perturbations, the question was: To which extent may the spectral properties of the measures $\mu_\alpha$ vary as we change $\alpha$. Much work has been done and many interesting examples were discovered, several are included in \cite{SIMREV}. 

First of all notice that the theory of rank one perturbations for pure point and the singular continuous spectrum is quite different in nature. Indeed, it is possible for $A_\alpha$ to have purely singular continuous spectrum on the interval $[0,1]$ for all $\alpha$. But the same behavior is not possible for pure point spectrum. In fact, the perturbations $A_\alpha$ are pure point for all $\alpha$ if and only if the spectrum is countable.

Another question concerns the type of parameter sets that allow dense singular embedded (in absolutely continuous) spectrum. For several years, all examples exhibited dense singular embedded spectrum only for a Lebesgue measure zero set of parameters $\alpha$.
It came as a surprise when Del Rio, Fuentes and Poltoratski \cite{riofuepolt2002} proved the existence of a family of rank one perturbations with dense absolutely continuous spectrum and dense singular spectrum for almost every parameter $\alpha$ in an arbitrary (previously given) set $B\subset\R$ and with purely absolutely continuous spectrum for almost every $\alpha \in\R\backslash B$. Their proof uses Clark theory. Via a complicated construction they show the existence of a characteristic function for which the corresponding family of rank one unitary perturbations has the desired properties. In fact, it possible to produce most any type of singular spectrum in this setting, see \cite{riofuepolt2}. {\bf Open question} (see \cite{riofuepolt2}): Fix an interval $I\subset \R$ and subset $B\subset \R$. Can one find a family of measures $\mu_\beta$ so that $(\mu_\beta)\ti{s}(J)>0$ if and only if $\beta \in B$ and $(\mu_\beta)\ti{ac}(J)>0$ for all $\beta \in \R$ and for every subset $J\subset I$?

Another class of examples is concerned with the question of how unstable the spectral type may be, if we do not have absolutely continuous part. A result of Del Rio, Makarov and Simon \cite{denseGdelta} which was independently proved by Gordon \cite{Gordon1997} states the following. Consider $I\subset\supp\mu$ closed and not a singleton. If $\mu_\alpha|_I$ is singular, then the set of $\alpha$'s for which $\mu_\alpha$ is purely singular continuous is a dense $G_\delta$ set.

A converse to this statement was proved by C.~Sundberg (private communication): For any closed subinterval $I$ which is not a singleton and any $G_\delta$ subset of $\R$, there exists a family of measures corresponding to a family of rank one perturbations such that $\supp \mu\subset I$, $\mu_\alpha$ is purely singular continuous for $\alpha \in G$ and $\mu_\alpha$ is pure point for $\alpha \in \R\backslash G$. In the proof, Sundberg applies Clark theory. He constructs the characteristic function by defining a function on a Riemann surface $\mathcal R$ over the disk $\D$, and then applies the projection from $\mathcal R$ to $\D$.

\section{Self-adjoint rank one perturbations and Anderson-type Hamiltonians}\label{s-AndHam}
Anderson localization concerns an infinitely large crystal with ``small" impurities. These impurities are expressed in terms of an ``on-site potential" and associated with random variables with distribution given by a probability measure $\mathbb{P}$ on $\R^\infty$. The Anderson localization conjecture states that electrons of arbitrary energy will remain within a bounded region in space for all times (almost surely with respect to $\mathbb{P}$). Anderson localization has developed into a rich subject within mathematical physics. Many related operators are investigated and many notions of the term ``localization" were and are being studied. {\bf Vague open problem:} Many conjectures of Anderson localization are unsolved for dimension two and higher.

Let us introduce probably the simplest and most important operator, the discrete random Schr\"odinger operator. Consider the vectors $\delta_n\in l^2(\Z^d)$ with $\delta_j=0$ if $j\neq n$ and 1 if $j = n$ (where $j,n\in \Z^d$). Let $\omega= (\omega_1, \omega_2, \hdots)$ where the $\omega_i$'s are independent, identically distributed random variables with distribution $d p = f dx$, $f\in L^1$. Let $\mathbb{P} = \Pi_n dp$.
The discrete random Schr\"odinger operator is given by
\[
(\bigtriangleup_\omega) u (j)=  - \sum_{|n|=1} (u(j+n) - u(j)) + \omega_j u(j)
\]
consisting of the Laplacian plus the random on-site potential. The discrete Schr\"odinger operator describes the situation where the atoms of the crystal are located at the integer lattice points of $\Z^d$. Adding the random part can be interpreted as having the atoms being not perfectly on the lattice points, but randomly displaced according to the probability distribution $\mathbb{P}$. {\bf More concise open problems:} If $d\ge 2$, does the discrete random Schr\"odinger operator with $\mathbb{P} = \Pi ((2c)^{-1} \chi_{[-c,c]} dx)$ have purely singular spectrum for all $c>0$. This statement is known for $c$ above a certain threshold (which is dependent on the dimension). The same question is open if $\omega_j = \pm1$ with equal probability.

Simon and Wolff \cite{SIMWOL} found a relationship between rank one perturbations and discrete random Schr\"odinger operators. They used the theory of rank one perturbations in order to prove an important case of the, at the time, almost 50-year-old conjecture of Anderson localization in the case of one dimension.
Moreover, the following three results are based on this discovery by Simon and Wolff.

Simon \cite{Sim1994} proved that all $\delta_n$ are cyclic for $\bigtriangleup_\omega$ restricted to an interval $I = [a,b]$ for almost all $\omega\in\mathbb{P}$, if we assume that almost surely (wrt $\omega\in\mathbb{P}$) $I\subset\sigma(\bigtriangleup_\omega)$ and for the pure point spectrum we have $\sigma\ti{pp}(\bigtriangleup_\omega)\cap I = I$ almost surely.

Jaksic and Last \cite{JakLast2000, JakLast2006} extended the ideas of Simon and Wolff to determine most of what is known about the spectral structure of Anderson-type Hamiltonians:
\[
H_\omega = A + \sum_n \omega_n (\fdot, \f_n)\f_n,
\]
where $A$ is a self-adjoint operator on a separable Hilbert space, the $\f_n$ form an orthonormal basis and the $\omega = (\omega_1, \omega_2,\hdots)$ are the random variables.
Anderson-type Hamiltonians are a generalization of the discrete random Schr\"odinger operator.
Jaksic and Last proved, for example, that the singular part $(H_\omega)\ti{s}$ is simple almost surely by showing that certain (complicated) vectors are cyclic.

As refinement of the work by Jaksic and Last it was shown \cite{AbaLiawPolt} that if there exists a vector that is cyclic for $H_\omega$ almost surely, then any non-zero vector is cyclic for $H_\omega$ almost surely. In particular, it follows that any non-zero vector is cyclic for $(H_\omega)\ti{s}$ almost surely.

Another relationship between rank one perturbations and Anderson-type Hamiltonians was discovered, see \cite{Liaw2010}. In some sense and under certain conditions, rank one perturbations are as difficult as Anderson-type Hamiltonians. More precisely, under some assumptions, two realizations $H_\omega$ and $H_\eta$ are almost surely $\omega\times\eta\in \mathbb{P}\times\mathbb{P}$ unitarily equivalent modulo a rank one perturbations, that is, for almost all $(\omega, \eta)$ there exists a unitary operator $U$ and a vector $\f\in\cH$ such that $UH_\omega U^{-1} = H_\eta + (\fdot, \f)\f$. The proof relies on refining a method by Poltoratski \cite{alex2000, alex1998} which involves the Krein--Lifshitz spectral shift function. 

{\bf Vague direction:} Results of the above types are related to the Anderson localization conjecture. But even if one does not prove the full conjecture, it would be of interest to find out more about the (almost sure) spectral properties of Anderson-type Hamiltonians via studying rank one and finite rank perturbations; or vice versa.

\section{Sz.-Nagy--Foia\c s model theory and rank $n$ unitary perturbations}\label{s-GenMod}
Let us introduce the relationship between the spectral theory of finite rank perturbations and Sz.-Nagy--Foia\c s model theory. We closely follow \cite{RonRkN} and the references therein. Much of this section can be generalized to infinite rank perturbations, if we assume defect operators of trace class.

The main objective is to study the spectral properties of the finite rank perturbations given by \eqref{d-UA} below via the function theoretic properties of the corresponding characteristic operator function $\Theta$.

Consider Hilbert spaces $\mathcal{E}$ and $\mathcal{E}_\ast$ with $\dim(\mathcal{E}) = \dim(\mathcal{E}_\ast)=n<\infty$. Let $\Theta\in H^\infty(\mathcal{E}\to \mathcal{E}_\ast)$ with operator norm $\|\Theta(z)\|\ci{\mathcal{E} \to \mathcal{E}_\ast}\le 1$ for all $z\in\D$. Without loss of generality, we assume $\Theta(0)$ is the zero operator. Then the non-tangential boundary limits of $\Theta$ exist in the operator norm topology a.e.~Lebesgue $m$. We denote this limit also by $\Theta$.
An operator-valued function $\Theta$ is said to be \emph{inner}, if $\Theta(\xi)$ is a unitary operator for $\xi\in\T$ a.e.~$m$.

Let $\OID_\mathcal{E}$ denote the identity operator on $\mathcal{E}$.
Define $\bigtriangleup = (\OID_\mathcal{E} - \Theta \Theta^\ast)^{1/2}$.
The Sz.-Nagy--Foia\c s model space is given by
\begin{align}\label{d-Ktea}
\mathcal{K}_\Theta=\Kft.
\end{align}

Let $P_\Theta$ denote the orthogonal projection from $\begin{pmatrix}H^2(\mathcal{E}_\ast)\\ \clos\bigtriangleup L^2(\mathcal{E})\end{pmatrix}$ onto $\mathcal{K}_\Theta$. Further, consider the operators $T_z$ and $M_\xi^{\bigtriangleup}$ acting as multiplication by the independent variables $z\in\D$ and $\xi\in\T$, respectively.
A \emph{completely nonunitary} (cnu) contraction is an operator $X$ that satisfies $\|X\|\le 1$ and is not unitary on any of its invariant subspaces.
One can show that the following contraction is cnu:
\begin{align}\label{d-cnu}
T_\Theta = P_\Theta \begin{pmatrix} T_z & \OZ\\ \OZ & M_\xi^{\bigtriangleup} \end{pmatrix}
\qquad\text{on }\mathcal{K}_\Theta.
\end{align}

Let us explain briefly how Sz.-Nagy--Foia\c s model theory allows the study of all cnu contractions on a separable Hilbert space. Consider a contraction $U_\OZ $ on a separable Hilbert space $\cH$ with deficiency indices $(n,n)$ and defect spaces $\cD$ and $\cD_\ast$.

It is well-known that for any given cnu contraction $U_\OZ $ with deficiency indices $(n,n)$ on a separable Hilbert space $\cH$, there exists an operator-valued function $\Theta\in H^\infty (\cD\to\cD_\ast)$ such that the operators $(U_\OZ  \text{ on }\cH)$ and $(T_\Theta \text{ on } \mathcal{K}_\Theta)$ are unitarily equivalent (see for example, page 253 of \cite{SzNF2010}).

The function $\Theta$ is called the \emph{characteristic operator function} of the cnu contractions $U_\OZ $ and $T_\Theta$.

Let us outline the relationship of finite rank perturbations with model theory. 
With a rank $n$ unitary operator $A:\cD\to \cD_\ast$, Fuhrmann \cite{fuhrmann} constructed a unitary operator $U_A$ on $\cH$ by setting
\begin{align}\label{d-UA}
\begin{array}{rcccl}
\quad U_A:&\cH\ominus \cD&\longrightarrow&\cH\ominus \cD_\ast&
\quad\qquad\text{with }U_A f=U_\OZ f\text{ for } f\in\cH\ominus \cD\text{ and} \\
&\oplus&&\oplus\\
 U_A:&\cD&\longrightarrow&\cD_\ast &
\quad\qquad\text{by }U_A f=A f\text{ for } f\in\cD.
\end{array}
\end{align}

Similarly, we can view $U_A$ as rank $n$ perturbations of $U_\OZ$ on $\cH$ as follows.
Define the operators $P$ and $P_\ast$ to be the orthogonal projections of $\cH$ onto $\cD$ and $\cD_\ast$,
respectively. Further, we use the notation $P^\perp = I-P$ and $P^\perp_\ast = I-P_\ast$.
Consider the family of perturbations of $U_\OZ$ given by
\begin{align*}
 U_A = P^\perp_\ast U_\OZ P^\perp \oplus A P
 \qquad A: \cD\to \cD_\ast\text{ with unitary }A.
\end{align*}
It is not hard to see that $\cD_\ast$ is a cyclic subspace for $U_A$ for each unitary $A$.

Using that $\Theta(z) =  -U_\OZ  +z D_\ast (\OID - z U_\OZ ^*)^{-1} D$ with the defect operators $D$ and $D_\ast$, a formula for the characteristic operator function $\Theta$ was first obtained in \cite{BallLubin, fuhrmann}. Although, the operator function $S(z)$ that occurs on the right hand side of their formula is related to the operator-valued characteristic function $\Theta_A(z)$ via a unitary change of basis, the formula was used to prove some interesting spectral properties of $U_A$ and the higher rank unitary perturbations in terms of $\Theta$.
A more explicit formula for $\Theta$ was derived in Theorem 4.1 of \cite{RonRkN}.

Let $\mu_A$ denote the scalar-valued spectral measure; that is, we have the direct  integral
\begin{align}\label{d-directsum}
\cH = \oplus \int_\T \cH(\xi) d\mu_A(\xi)
\qquad\text{with }
U_A=\oplus \int_\T \xi\, \OID_{\cH(\xi)} d\mu_A(\xi),
\end{align}
where $ \OID_{\cH(\xi)} $ denotes the identity operator on the Hilbert space $ \cH(\xi) $.
Since $\cD_\ast$ is a cyclic subspace for $U_A$ of minimal dimension, we have $\max \dim\cH(\xi) = n$.

We re-write this perturbation as a problem on $
\mathcal{K}_\Theta = 
\begin{pmatrix}H^2(\cD_\ast)\vspace{.4mm}\\\clos\bigtriangleup L^2(\cD)\end{pmatrix}
\ominus \begin{pmatrix}\Theta\\\bigtriangleup\end{pmatrix} H^2(\cD),
$ where $\Theta$ is the characteristic operator function that corresponds to $U_\OZ$.

The following four statements are equivalent:
\begin{itemize}
\item[1)] The operators $(U_\OZ^\ast)^n$ converge to the zero operator in the strong operator topology as $n\to \infty$.
\item[2)] The model space reduces to $\mathcal{K}_\Theta=H^2(\cD)\ominus\Theta H^2(\cD)$; that is, the second component collapses to $\{0\}$.
\item[3)] The characteristic operator function $\Theta$ is inner.
\item[4)] The finite rank unitary perturbations $U_A$ all have purely singular spectrum, i.e.~$(\mu_A)\ti{ac}\equiv 0$ for all unitary $A$.
\end{itemize}

The equivalency of statements 1) and 3) can be found in \cite{BallLubin}, see also Proposition 3.5 of \cite{SzNF2010}. The fact that 2) $\Leftrightarrow$ 3) is trivial by virtue of the definition \eqref{d-Ktea} of $\mathcal{K}_\Theta$. The equivalency of parts 3) and 4) for general $n\in \N$ is the main result of \cite{RonRkN}. In the case $n=1$, the latter equivalence is implied by equation \eqref{e-hgamma} below. However, a generalization of the ``classical'' proof of this equivalence from $n=1$ to $n>1$ does not seem to be straightforward. {\bf Open problem:} It would be useful to extend the original proof to higher rank perturbations, since such an extension could yield the explicit density functions for the absolutely continuous part in the case of finite rank perturbations.

The equivalency of the above four statements explains why the case of purely singular spectrum is much simpler.

It is worth mentioning that the inner-outer factorization of operator-valued inner functions has proved to be involved. Concerning finite rank perturbations, this may mean that the situation for $n>1$ is significantly more difficult than $n=1$.

{\bf Open direction:} In the case of higher rank perturbations, we certainly cannot hope that the singular parts of two perturbed operators' spectral measures are mutually singular. However, it would be interesting to see how much of Aronszajn--Donoghue theory can still be true with appropriate changes in the statements. For example, instead of the scalar spectral measure, one would need to consider vector-valued measures.

\section{Aleksandrov--Clark Theory: Rank one unitary perturbations}\label{s-ACmeas}
We generalize Clark's setup to the case of non-inner characteristic functions which is often referred to as Aleksandrov--Clark theory. Rather than repeating the very accessible expositions \cite{cimaross, poltsara2006} of the substantial progress made by many mathematicians, we choose a slightly different, equivalent approach (see \cite{RonRkN} for further details) in order to highlight some of the differences between Clark and Aleksandrov--Clark theory.

When studying the relationship between the above setup for $n=1$ and the normalized Cauchy transform (below), we will mention another reason for this simplification.

Take $\te\in H^\infty(\D)$ with $\|\te\|\ci{H^\infty}\le 1$ and $\te(0)=0$.

Recall the definition of the Sz.-Nagy--Foia\c s model space $\mathcal{K}_\te$ and of the cnu contraction $T_\te$.
All rank one unitary perturbations of $T_\te$ are given by
	\[
	\widetilde U_\gamma=T_\te + \gamma \left(\fdot, \begin{pmatrix} z^{-1}\te\\ z^{-1}\bigtriangleup\end{pmatrix}\right)_{\mathcal{K}_\te}\begin{pmatrix}\ID_z\\0\end{pmatrix}
	\qquad\text{on }\mathcal{K}_\te\text{ for }|\gamma|=1,
	\]
where $ \left(\fdot, \begin{pmatrix} z^{-1}\te\\z^{-1}\bigtriangleup\end{pmatrix}\right)_{\mathcal{K}_\te}\begin{pmatrix}\ID_z\\0\end{pmatrix}$ denotes the rank one operator defined by
\[
\begin{pmatrix}f\\g\end{pmatrix} \mapsto \begin{pmatrix}\left[(f,z^{-1} \te)\ci{H^2} + (g,z^{-1}\bigtriangleup)\ci{L^2}\right]\ID_z\\0\end{pmatrix}.
\]
The defect spaces of $T_\te$ are $ \left<\begin{pmatrix}z^{-1}\te\\z^{-1}\bigtriangleup\end{pmatrix}\right>$ and $\left<\begin{pmatrix}\ID_z\\0\end{pmatrix}\right>$.

Let $\mu_\gamma$ denote the spectral measure of $\widetilde U_\gamma$ with respect to the cyclic vector $\kf{\ID_z}{0}$.

The Clark operator $\Phi_\gamma: \mathcal K_\te\to L^2(\mu_\gamma)$ for $\gamma\in \T$ intertwines $T_\theta$ and $U_0$ and it maps the defect spaces $\cD\ci{T_\theta}, \cD\ci{T_\theta^\ast}\subset \mathcal K_\te$ of $T_\theta$ to those of $U_0$. Here $U_0 = M_\xi - (\fdot, \bar \xi)\ID$ on $L^2(\mu_\gamma)$ was defined in Section \ref{s-GenMod} (for a general Hilbert space).
The identification of the defect spaces is proved in Lemma \ref{l-VKernels} below.

There is a one-to-one correspondence between the spectral families that arise from rank one unitary perturbations and the corresponding $\te\in H^\infty (\D)$ with $\|\te\|\ci{H^\infty} \le 1$. For $z\in \C\backslash\T$ we have by a Herglotz argument that
\[
 \frac{\gamma + \te(z)}{\gamma - \te(z)} = \int\ci\T \frac{\xi+z}{\xi-z}\,d\mu_\gamma(\xi),
\]
see for example, \cite{cimaross}. It is not hard to see that $\mu_\gamma$ is a probability measure. In fact, the latter condition is equivalent to our assumption that $\theta(0)=0$.

For a complex-valued measure $\tau$, the \emph{Cauchy transform} $K$ of the measure $f\tau$ is given by
\begin{align}\label{d-Cauchy}
K(f\tau)(z)=\int\ci\T\frac{f(\xi)d\tau(\xi)}{1-\bar\xi z},\qquad
z\in\C\backslash \T, f\in L^1(|\tau|).
\end{align}

The \emph{normalized Cauchy transform}
\begin{equation}\label{d-noCT}
\calC_{f\tau}(z)= \frac{K(f\tau)}{K\tau}(z)\,,\qquad z\in\C\backslash \T, f\in L^1(|\tau|),
\end{equation}
is an analytic function on $\C\backslash \T$. It is one of the central objects in the theory of rank one perturbations.

Poltoratski's theorem says that for a complex Borel measure $\tau$ on $\T$ and any $f\in L^1(|\tau|)$, the non-tangential limit of $\calC_{f\tau}$ exists and is equal to $f(\xi)$ with respect to the singular part a.e.~$|\tau|\ti{s}$
(see, for example, Poltoratski \cite{NONTAN}, also see \cite{NPPOL}).

\begin{rem}
Given a function $f\in H^\infty(\D)$ with $\|f\|\ci{H^\infty} \le 1$ and $f(0)=0$, one can use Poltoratski's theorem to show the existence (and obtain the value) of the non-tangential boundary value at some $\zeta\in\T$. Indeed, this can be done for some $|\gamma|=1$ by finding a corresponding Aleksandrov--Clark measure $\mu_\gamma$ that has a point mass at $\zeta$. One can then proceed in a similar way using the singular continuous part of $\mu_\gamma$.
\end{rem}

The role of the normalized Cauchy transform central object behind several important results in Clark theory.
If $\te$ is inner, we have $\Phi_\gamma^\ast f= \calC_{f \mu_\gamma}$ for all $f\in L^2(\mu_\gamma)$. For inner $\theta$, the latter formula plus Poltoratski's theorem, is the key to several results in Clark theory.

If $\theta$ is not inner, then $\Phi_\gamma^\ast f\in \mathcal{K}_\theta$ implies that it is two-valued. The situation is much more difficult. The author, in collaboration with S.~Treil, proved the following new (until now unpublished) representation in the case $\gamma=1$. We included the proof in the appendix, Section \ref{s-appendix}.

Let $P_\te$ be the orthogonal projection of $\begin{pmatrix}H^2\\ \bigtriangleup L^2\end{pmatrix}$ onto $\mathcal{K}_\theta$.

\begin{theo}\label{t-repr}
Under the above assumptions, we have
\begin{align}\label{f-repr}
\Phi_1^\ast f(z)&= P_\te\left[f(z)+ \int\frac{\xi(f(\xi)-f(z))}{\xi-z}\, d\mu(\xi)\right]\kf{\ID_z-\te}{-\bigtriangleup}
\end{align}
for $f\in C^1(\T)$.
\end{theo}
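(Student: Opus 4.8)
The plan is to pin down $\Phi_1^\ast$ by the two properties that characterize the adjoint (equivalently the inverse) of the unitary Clark operator: its value on the cyclic vector and its behaviour under the intertwining relation. Write $\Psi f$ for the right-hand side of \eqref{f-repr} and $B[f](z)=f(z)+\int\frac{\xi(f(\xi)-f(z))}{\xi-z}\,d\mu(\xi)$ for the scalar bracket, where $\mu=\mu_1$. First I would record the consequences of the Herglotz formula. Splitting $B[f]$ and using $\frac{\xi}{\xi-z}=\frac{1}{1-\bar\xi z}$ on $\T$ gives $B[f]=\frac{-\te}{1-\te}\,f+K(f\mu)$, and the Herglotz identity with $\gamma=1$ yields $K\mu(z)=(1-\te(z))^{-1}$, so that $B[f]\,(1-\te)=\calC_{f\mu}-\te f$ with $\calC_{f\mu}=(1-\te)K(f\mu)$. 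Two algebraic facts will drive everything: the Cauchy-transform recursion $K(\xi f\mu)(z)=zK(f\mu)(z)+\int\xi f\,d\mu$, which follows from $\frac{\xi-z}{1-\bar\xi z}=\xi$; and the identity $\kf{\ID_z-\te}{-\bigtriangleup}=\kf{\ID_z}{0}-\kf{\te}{\bigtriangleup}$, whence $P_\te\kf{\ID_z-\te}{-\bigtriangleup}=\kf{\ID_z}{0}$, because $\kf{\te}{\bigtriangleup}H^2$ is annihilated by $P_\te$ while $\kf{\ID_z}{0}\in\mathcal{K}_\te$ (this last inclusion using $\te(0)=0$).

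From the stated intertwining of $T_\te$ with $U_0$ and the perturbation structure relating $\widetilde U_1$ to $M_\xi$, one reads off that $\Phi_1$ sends the cyclic vector $\kf{\ID_z}{0}$ to the constant $1$ and satisfies $\Phi_1^\ast M_\xi=\widetilde U_1\Phi_1^\ast$. Hence $\Phi_1^\ast$ is determined by the single value $\Phi_1^\ast 1=\kf{\ID_z}{0}$ together with this intertwining, since $1$ is cyclic for the unitary $M_\xi$. The base case is immediate from the preliminaries: for $f\equiv1$ we get $B\equiv1$ and $\Psi 1=P_\te\kf{\ID_z-\te}{-\bigtriangleup}=\kf{\ID_z}{0}=\Phi_1^\ast 1$.

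The heart of the argument is to verify that $\Psi$ obeys the same intertwining relation, i.e.\ $\Psi(\xi f)=\widetilde U_1\,\Psi f$ for $f\in C^1(\T)$ (note $\xi f\in C^1(\T)$). Here I would use the recursion to express $B[\xi f]$ in terms of $B[f]$, and then compute $\widetilde U_1\,\Psi f$ explicitly: apply $T_\te=P_\te\left(\begin{smallmatrix}T_z&\OZ\\\OZ&M_\xi^{\bigtriangleup}\end{smallmatrix}\right)$ (multiplication by $z$ and $\xi$ followed by $P_\te$) to $P_\te\big(B[f]\kf{\ID_z-\te}{-\bigtriangleup}\big)$, and add the rank-one correction, whose coefficient is the inner product against $\kf{z^{-1}\te}{z^{-1}\bigtriangleup}$. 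Matching the two sides reduces, after the Herglotz and recursion identities, to a bookkeeping identity among Cauchy transforms and the projection. Granting this, induction gives $\Psi(\xi^k)=\widetilde U_1^k\kf{\ID_z}{0}=\Phi_1^\ast(\xi^k)$ for all $k\in\Z$ (the negative powers via unitarity of $\widetilde U_1$), so $\Psi=\Phi_1^\ast$ on trigonometric polynomials. Since these are dense in $L^2(\mu)$ and in $C^1(\T)$, and $\Phi_1^\ast$ is bounded in $L^2(\mu)$-norm, the formula \eqref{f-repr} extends to all $f\in C^1(\T)$ once $\Psi$ is shown continuous from $C^1(\T)$ into $\mathcal{K}_\te$.

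The main obstacle I anticipate is exactly this intertwining computation, because it forces one to commute the (non-orthogonal) projection $P_\te$ past multiplication by $z,\xi$ and past the rank-one term, while simultaneously tracking the singular-integral algebra on the two-component model space. The regularizing subtraction $f(\xi)-f(z)$ together with the $C^1(\T)$-hypothesis is precisely what makes the boundary values of $B[f]$ well defined and the operator $\Psi$ continuous; Poltoratski's theorem enters to control the boundary behaviour of $\calC_{f\mu}$ on the singular part of $\mu$, which is what allows the density argument to close and distinguishes the non-inner case from the inner one, where the second component is absent and $\Psi f$ collapses to $\calC_{f\mu}$.
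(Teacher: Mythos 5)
Your overall strategy --- characterize $\Phi_1^\ast$ by its value on the cyclic vector together with an intertwining relation, verify both properties for the explicit right-hand side $\Psi$, then extend by density --- is sound, and your preliminary identities are all correct: $K\mu=(1-\te)^{-1}$ for $\gamma=1$, the recursion $K(\xi f\mu)(z)=zK(f\mu)(z)+\int\xi f\,d\mu$, the identity $P_\te\kf{\ID_z-\te}{-\bigtriangleup}=\kf{\ID_z}{0}$ (using $\te(0)=0$), and the base case $\Psi 1=\Phi_1^\ast 1$. But there is a genuine gap at exactly the point you flag yourself: the verification that $\Psi(\xi f)=\widetilde U_1\Psi f$ is never carried out; you write ``Granting this,'' and everything after that sentence depends on it. That identity is not bookkeeping --- it is the entire content of the theorem. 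In the paper's proof this is precisely where the work lies: one uses the intertwining of the \emph{contractions}, $T_\te V=VU_0$ with $U_0=M_\xi-(\fdot,\bar\xi)\ID$, to get $VM_\xi=T_\te V+(\fdot,\bar\xi)\ci{L^2(\mu)}x(z)$, runs the induction \eqref{f-indass}, and then converts the resulting sum $\sum_{k=0}^{n-1}\xi^{k+1}P_\te z^{n-k-1}x(z)$ into the singular-integral kernel via the telescoping identity \eqref{f-geo}; the negative powers \eqref{f-negn} need a separate argument using the adjoint relation, and Lemma \ref{l-VKernels} is needed to identify $V\ID_\xi$ and $V\bar\xi$ with the model-space defect vectors. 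Your route would have to reproduce an equivalent computation: expanding $\widetilde U_1\Psi f=T_\te\Psi f+\left(\Psi f,\kf{z^{-1}\te}{z^{-1}\bigtriangleup}\right)\ci{\mathcal{K}_\te}\kf{\ID_z}{0}$ forces you to evaluate the inner product of a projected, two-component, singular-integral expression against the defect vector, which is no easier than the paper's forward induction on monomials. Until that computation is done, nothing is proved.

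Two secondary points. First, the extension from polynomials to $C^1(\T)$ is also left conditional (``once $\Psi$ is shown continuous''); the paper closes this step concretely by choosing polynomials with $p_k\to f$ and $p_k'\to f'$ uniformly, using the mean value theorem and dominated convergence to pass to the limit inside the integral, and then an a.e.-convergent subsequence --- no appeal to Poltoratski's theorem is needed, contrary to your last paragraph (it plays no role in this proof). Second, $P_\te$ \emph{is} an orthogonal projection (onto $\mathcal{K}_\te$); the issue is only that it does not commute with multiplication. In fact the relation $T_\te P_\te=P_\te M_z$ on $\begin{pmatrix}H^2\\ \clos\bigtriangleup L^2\end{pmatrix}$, valid because $z\kf{\te}{\bigtriangleup}H^2\subset\kf{\te}{\bigtriangleup}H^2$, is exactly the commutation you would need, and it is the same observation that drives the paper's telescoping argument --- so your ``different'' route, once executed, collapses into essentially the paper's proof.
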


\begin{rems}
(a) Let us briefly mention that formula \eqref{f-repr} contains a slight abuse of notation (for a more precise explanation confer the remark below in the proof of this result): We take scalar multiplication of the vector with the function in square brackets, by taking its non-tangential boundary values into the second component (the first component is a function on the unit disc while the second one is on the unit circle). Projection $P_\te$ is then applied to the resulting vector.\\
(b) Formula \eqref{f-repr} is not an easy object of study. Taking the projection of a vector valued function containing singular integrals. One can think of the formula as applying two singular integral operators consecutively. {\bf Open problem:} Find an interpretation for this formula. In particular, find a reasonable expression for $\Phi_1^\ast f$ that does not involve $P_\theta$. The importance of the normalized Cauchy transform in the case of inner functions indicates that such a simplified representation will be very useful.
\end{rems}

Consider the Lebesgue decomposition $d\mu_\gamma = d(\mu_\gamma)\ti{ac} + d(\mu_\gamma)\ti{s}$, where $d(\mu_\gamma)\ti{ac} = w_\gamma dm$, $w_\gamma \in L^1(m)$.
Using analytic methods, one can express the density of the absolutely continuous part of the spectral measure $\mu_\gamma$ is given by
\begin{align}\label{e-hgamma}
w_\gamma(\xi) = \frac{1-|\theta(\xi)|^2}{|\gamma - \theta(\xi)|^2}\qquad\text{for }\xi\in\T\text{ a.e.~}m,
\end{align}
see for example, \cite{cimaross}, Proposition 9.1.14.

In \cite{AbaLiawPolt} the authors proved a certain Aronszajn--Krein type formula which, when translated to the case of rank one unitary perturbations, states that the normalized Cauchy transform of $(V_\gamma f)\mu_\gamma$, $ f \in L^2(\mu)$, is independent of $\gamma$ where $V_\gamma$ is the unitary operator $V_\gamma:L^2(\mu)\to L^2(\mu_\gamma)$ such that $V_\gamma U_\gamma =  M_\zeta V_\gamma$ and $V_\gamma \ID_\xi= \ID_\zeta$ and
the operator $M_\zeta$ acts as multiplication by the independent variable in $L^2(\mu_\gamma)$. More precisely, we have $\mathcal{C}_{(V_\gamma f)\mu_\gamma}=\mathcal{C}_{f\mu}$ for $f\in L^2(\mu)$.

In \cite{RonRkN} the authors obtained a very simple representation of the characteristic function of $U_0$, namely, $\theta(z) = z \calC_{\bar\xi\mu}(z)= \gamma z \calC_{\bar\xi\mu_\gamma}(z)$ for $z\in\D$, $|\gamma|=1$.

For $f\in L^2(\mu_\gamma)$, the normalized Cauchy transform $\calC_{f \mu_\gamma}$ is analytic in the open unit disc $\D$. It follows from the latter representation of the characteristic function that, if we take $f(\xi) = \bar\xi$, then $\calC_{\bar\xi\mu_\gamma}$ is also uniformly bounded on $\D$.

Further, the formula for $\te$ was used to explicitly express the non-tangential jump behavior of the normalized Cauchy transform of the measure $\bar\xi \mu$ across the unit circle. Let us briefly explain this result.
It follows from Poltoratski's theorem that the non-tangential boundary values of the normalized Cauchy transform from the inside and outside, respectively, coincide a.e.~$(\mu_\gamma)\ti{s}$. However, Poltoratski's theorem does not provide any information about the boundary values on the absolutely continuous part $(\mu_\gamma)\ti{ac}$. On that part of $\T$, the difference of the non-tangential boundary values of the normalized Cauchy transform (from the inside and outside, respectively) is some non-zero function, since $\calC_{f\mu}$ is an analytic function on $\C\backslash\T$. In \cite{RonRkN}, the authors found a simple explicit expression for this jump for non-tangential boundary limits of $\calC_{\bar\xi\mu_\gamma}$.

{\bf Open problems:}
Is it possible to control the jump of the normalized Cauchy transform of $f\mu_\gamma$, $f\in L^2(\mu_\gamma)$? This may be useful for generalizing the connection to pseudocontinuation, see Section \ref{s-pseudo} below.
Further relationships between the boundary values of $\theta$ and the spectral behavior of $U_\gamma$ would be of interest.

\section{Generalization of pseudocontinuation}\label{s-pseudo}
In the case where $n=1$ and the characteristic function $\te$ is inner, a famous result by Douglas--Shapiro--Shields \cite{DSS} relates the existence of a pseudocontinuation of a function to the question whether the function is an element of the model space $K_\te$. Pseudocontinuable functions have interesting connections in operator and function theory, especially concerning the cyclicity of functions (see, for example, \cite{cimaback, NikolskiTreat}).

If we do not assume that the characteristic function is inner, then the corresponding Nikolski--Vasyunin coordinate free model space consists of pairs of functions, and so does the Sz.-Nagy--Foia\c s transcription \eqref{d-Ktea}. In this case, Kapustin \cite{Kapustin1, Kapustin2} provided a unitarily equivalent formulation of rank one unitary perturbations of a cnu contraction via a model space consisting of pairs of functions, the first of which is meromorphic inside the unit disc and the second of which is meromorphic outside the unit disc, and the boundary values of which satisfy a certain jump condition.

Although Kapustin's main goal was to construct a model operator that is unitarily equivalent to a given operator, his result can be thought of as a generalization of pseudocontinuation by allowing a certain controlled jump behavior at the boundary. It involves the existence of two functions from $H^2$ in terms of which the jump condition of the pair of meromorphic functions is formulated.
His approach relies heavily on the coordinate free model space by Nikolski and Vasyunin.

{\bf Many open questions:} What do the different choices of $H^2$ functions have in common? Can we express these conditions in terms of the spectral properties of the class of rank one unitary perturbations of the related non-unitary operator? Is there a canonical way of choosing the $H^2$ functions?
What does Kapustin's result mean in some transcription of the coordinate free model theory? Does Kapustin's approach extend to rank $n$ unitary perturbations where $n>1$ (certain generalizations of the boundary convergence of pseudocontinuable functions were proved to the vector-valued case, see \cite{AlexKapustin})?
Finally, how are the two meromorphic functions connected to the well-studied objects from Aleksandrov--Clark theory, for example, the normalized Cauchy transform. More precisely, can we relate the pair of functions to the non-tangential jump behavior of the normalized Cauchy transform $\mathcal{C}_{\bar\xi\mu}$?

\section{Certain other models}\label{s-last}
Apart from Sz.-Nagy--Foia\c s theory, there are several other model theoretic approaches to finite rank perturbations, for example, via the de Branges--Rovnyak space (see \cite{DB1, DB2}, also see \cite{SAR}). Recently, Kapustin \cite{Kapustin2} introduced another interesting approach to rank one perturbations.
Let us explain the larger framework between the latter three approaches in the case of rank one perturbations.

Consider a cnu contraction $U_0$ on a separable Hilbert space with deficiency indices $(1,1)$. Via model theory, such an operator has a characteristic function $\theta$ which is an analytic self-map of the unit disk. The idea is to study the operator theoretic properties of the rank one unitary perturbations of $U_0$ via the function theoretic properties of $\theta$. Consider $\bigtriangleup(\xi) = \sqrt{1-|\theta(\xi)|^2}$ for $\xi\in\T$ (which is defined Lebesgue a.e.).

If $\theta$ is inner (i.e.~$|\theta(z)|=1$ for $z\in\T$ Lebesgue a.e.), then $\bigtriangleup \equiv 0$ and $\int \log \bigtriangleup = -\infty$. This case is studied in Clark theory. In Clark theory the map from the model space to the Hardy space $H^2$ is one-to-one.
Aleskandrov--Clark theory uses many similar methods but does not assume the characteristic function to be inner. As a result the map from the model space to the Hardy space $H^2$ is not one-to-one and many complications occur. The full (two-valued) version of the Sz.-Nagy--Foia\c s model space must be studied.

The approaches of de Branges--Rovnyak  and Kapustin also do not assume that $\theta$ is inner. Kapustin's restriction is equivalent to the condition $\int\log|\theta|>-\infty$. {\bf Open question:} Does this condition have an interpretation in terms of the spectrum?

When using de Branges--Rovnyak spaces, there are two cases. If $\int \log \bigtriangleup = -\infty$, then the model simplifies, as the map from the model space to $\bigtriangleup L^2$ is one-to-one. This case happens if and only if $\theta$ is extreme. The latter is moreover equivalent to the polynomials being dense in the set of square integrable functions with respect to the spectral measure all of the perturbed operators. However, if $\int \log \bigtriangleup > -\infty$, then the corresponding map is not one-to-one and the situation becomes more complicated.
It is worth mentioning that in this case we have $\bigtriangleup>0$ and $|\theta|<1$ Lebesgue a.e., and by equation \eqref{e-hgamma}, it follows that $\sigma\ti{ac} = \T$.

\section{Appendix: Proof of Theorem \ref{t-repr}}\label{s-appendix}
Let operators $\widetilde U_1$, $T_\te$, $K_\te$, $\Phi^\ast_1$ and the spectral measure $\mu$ be defined as in Section \ref{s-ACmeas}. In order to simplify notation let $V=\Phi^\ast_1$. Recall that $V:L^2(\mu) \to \mathcal{K}_\theta$.

It suffices to prove two separate formulas for monomials in $\xi$ and in $\bar\xi$, namely
\begin{align}
V\xi^n
&=
P_\te\left[z^n+ \int\frac{\xi(\xi^n-z^n)}{\xi-z}\, d\mu(\xi)\right]x(z),
\qquad n\in\N\cup\{0\},\label{f-posn}\\
V\bar\xi^n
&=
P_\te\left[\bar z^n x(z)- \int\frac{\xi(\bar\xi^n-\bar z^n)}{\xi-z}\, d\mu(\xi)z^{-1}y(z)\right],
\qquad n\in\N,\label{f-negn}
\end{align}
where
\begin{align}\label{f-xy}
x(z)=V \ID_\xi= \kf{\ID_z}{0}\in K_\te,
\qquad
y(z)=V\bar \xi=\kf{ z^{-1} \te}{ z^{-1} \bigtriangleup} \in K_\te.
\end{align}

Indeed, to see the sufficiency of \eqref{f-posn} and \eqref{f-negn} note that $\kr \in\kr H^2$ and $ z^{-1}\in H_-^2$. So, in $K_\te$, we have
\[
x(z)=\kf{\ID_z-\te}{-\bigtriangleup}
\qquad\text{and}\qquad
y(z)=\kf{ z^{-1}\te- z^{-1}}{ z^{-1}\bigtriangleup}\,,
\]
which implies that $x(z)=-zy(z)$ in $K_\te$.
The representation theorem \ref{t-repr} follows for monomials $f(\xi)=\xi^k$, $k\in \Z$, and by linearity for polynomials.

Let us extend this formula to $C^1(\T)$. Take arbitrary $f\in C^1(\T)$.  Take a sequence of polynomials $\{p_k\}$ such that $p_k\to f$,  $p_k'\to f'$ uniformly on $\T$. In particular we have $p_k\to f$ in $L^2(\mu)$ and $|p_k'|\le C$ (with constant $C$ independent of $k$).  
We substitute the polynomials  $p_k$ into \eqref{f-repr} and take the limit as $k\to \infty$.  Recall that $V:L^2(\mu)\to K_\te$ is a unitary operator. So on the left hand side we get $Vp_k\to Vf$ in $K_\te$. By the mean value theorem  and Lebesgue dominated convergence we have
\[
\int\frac{\xi(p_k(\xi)-p_k(z))}{\xi-z}\, d\mu(\xi)
\,\,\, \longrightarrow \,\,\,
\int\frac{\xi(f(\xi)-f(z))}{\xi-z}\, d\mu(\xi)
\qquad\text{for all }z\in \T.
\]

Taking a subsequence $p_{k_n}$ such that $p_{k_n} \to f$ Lebesgue a.e., and $V p_{k_n}\to Vf$ Lebesgue a.e. (in both components) we get Theorem \ref{t-repr}.

Therefore, it remains to prove equations \eqref{f-posn}, \eqref{f-negn} and \eqref{f-xy}.

Let us begin by proving \eqref{f-posn} (minus the identities \eqref{f-xy} which are proven below in Lemma \ref{l-VKernels}).
Recall that $V:L^2(\mu)\to K_\te$ such that $T_\te V= V U_0$ is the adjoint of the Clark operator. Taking into account that $ U_0 = M_\xi - (\fdot, \bar\xi)\ID$, we obtain
\begin{align}\label{f-inter}
T_\te V= V \bigl[ M_\xi -  (\fdot, \bar \xi )\ci{L^2(\mu)}\ID_z\bigr].
\end{align}
Let us proceed by induction, using
\begin{align}\label{f-ind1}
VM_\xi &= T_\te V +  (\fdot, \bar \xi )\ci{L^2(\mu)} x(z), \qquad x(z)=V \ID_\xi
\end{align}
for $n=1$ and making the assumption
\begin{align}\label{f-indass}
VM_\xi^n &= T_\te^n V +  \sum_{k=0}^{n-1} (\fdot, \bar \xi^{k+1} )\ci{L^2(\mu)} T_\te^{n-k-1}x(z).
\end{align}
We finish the induction with the computation
\begin{align*}
VM_\xi^{n+1} 
&= (VM_\xi) M_\xi^{n}=T_\te (VM_\xi^{n}) +  (M_\xi^{n}\fdot, \bar \xi )\ci{L^2(\mu)} x(z)\\
&=
T_\te^{n+1} V +  \sum_{k=0}^{n-1} (\fdot, \bar \xi^{k+1} )\ci{L^2(\mu)} T_\te^{n-k}x(z)+ (\fdot, \bar \xi^{n+1} )\ci{L^2(\mu)} x(z)\\
&=
T_\te^{n+1} V +  \sum_{k=0}^{n} (\fdot, \bar \xi^{k+1} )\ci{L^2(\mu)} T_\te^{n-k}x(z)
\end{align*}
where we applied \eqref{f-ind1} and \eqref{f-indass}.

It is not hard to see that $T_\te^n=P_\te \begin{pmatrix} T_z^n &0\\0& (M_{e^{it}}^\bigtriangleup)^n\end{pmatrix}$.

\begin{rem}
Since this matrix is diagonal, it essentially behaves like a scalar. Slightly abusing notation, and WLOG, we write $T_\te^n\cdot_z=P_\te M_z^n\cdot_z = P_\te z^n\cdot_z$. This explains the notation in Theorem \ref{t-repr}.
\end{rem}

If we can show that
\begin{align}\label{f-geo}
\sum_{k=0}^{n-1} \xi^{k+1} P_\te z^{n-k-1}x(z)
&=
P_\te \,\frac{\xi(\xi^n-z^n)}{\xi-z}\,x(z),
\end{align}
then \eqref{f-posn} is proven.

Let us prove formula \eqref{f-geo}.
By telescoping sums we obtain
\begin{align*}
&P_\te (\xi^n-z^n)\,\cdot_z=(\xi^n-P_\te z^n)\,\cdot_z=(\OID-P_\te z\bar\xi)\,\sum_{k=0}^{n-1}   \xi^{k+1} P_\te z^{n-k-1}\,\cdot_z\\
&=P_\te(1- z\bar\xi)\,\sum_{k=0}^{n-1}   \xi^{k+1} P_\te z^{n-k-1}\,\cdot_z
=P_\te(1- z\bar\xi)\,\sum_{k=0}^{n-1}   \xi^{k+1}  z^{n-k-1}\,\cdot_z\\
&=\sum_{k=0}^{n-1}   \xi^{k+1} P_\te z^{n-k-1}(1- z\bar\xi)\,\cdot_z
=\sum_{k=0}^{n-1}   \xi^{k+1} P_\te z^{n-k-1}\bar\xi(\xi- z)  \,\cdot_z,
\end{align*}
and identity \eqref{f-geo} follows immediately.

Formula \eqref{f-negn} (minus \eqref{f-xy}) is proven in analogy by using the adjoint of $T_\te V= V U_0$.

In the proof of Theorem \ref{t-repr}, it remains to show \eqref{f-xy}.

\begin{lem}\label{l-VKernels}
The defects of $T_\theta$ are given by $V\bar\xi=\kf{z^{-1} \te}{z^{-1} \bigtriangleup}$ and $V\ID_\xi=\kf{\ID_z}{0}$.
\end{lem}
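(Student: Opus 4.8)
The plan is to use that the Clark operator $\Phi_1$ is the spectral representation of the unitary $\widetilde U_1$ with respect to its cyclic vector $\kf{\ID_z}{0}$; equivalently, $V=\Phi_1^\ast:L^2(\mu)\to\mathcal K_\te$ is the unitary carrying the constant function $\ID_\xi=1$ to $\kf{\ID_z}{0}$ and satisfying the intertwining $\widetilde U_1 V=V M_\xi$. I will read off the two formulas in \eqref{f-xy} by evaluating $V$ on the two functions $\ID_\xi$ and $\bar\xi$ that generate the space where the perturbation lives.

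The identity $V\ID_\xi=\kf{\ID_z}{0}$ is just the normalization of the spectral representation: since $\mu=\mu_1$ is, by definition, the spectral measure of $\widetilde U_1$ with respect to the cyclic vector $\kf{\ID_z}{0}$, the representation $\Phi_1$ sends $\kf{\ID_z}{0}$ to $\ID_\xi=1$, so $V\ID_\xi=\Phi_1^\ast\ID_\xi=\kf{\ID_z}{0}$. (That $\kf{\ID_z}{0}$ really lies in $\mathcal K_\te$ follows from $\te(0)=0$: for $h\in H^2$ one has $\bigl(\kf{\ID_z}{0},\kf{\te}{\bigtriangleup}h\bigr)=(1,\te h)\ci{H^2}=\overline{\te(0)\,h(0)}=0$.)

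For $V\bar\xi$ I would pass to the adjoint form of the intertwining. Since $V$, $M_\xi$ and $\widetilde U_1$ are unitary, $\widetilde U_1 V=V M_\xi$ gives at once $\widetilde U_1^\ast V=V M_{\bar\xi}$. Evaluating this at $\ID_\xi=1$, using $M_{\bar\xi}1=\bar\xi$ and the identity just proved, reduces everything to a single computation in $\mathcal K_\te$:
\[
V\bar\xi=\widetilde U_1^\ast\,\kf{\ID_z}{0}.
\]
Inserting the explicit rank one form $\widetilde U_1^\ast=T_\te^\ast+\bigl(\fdot,\kf{\ID_z}{0}\bigr)\kf{z^{-1}\te}{z^{-1}\bigtriangleup}$ and using $\bigl\|\kf{\ID_z}{0}\bigr\|^2=1$, this becomes $V\bar\xi=T_\te^\ast\kf{\ID_z}{0}+\kf{z^{-1}\te}{z^{-1}\bigtriangleup}$, so the whole lemma now rests on the claim $T_\te^\ast\kf{\ID_z}{0}=0$.

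That last step is the only genuinely model-theoretic point and is where I expect the care to be needed. Because $T_\te$ is the compression $P_\te\,\mathrm{diag}(T_z,M_\xi^{\bigtriangleup})|_{\mathcal K_\te}$, a one-line adjoint computation on $\mathcal K_\te$ gives $T_\te^\ast=P_\te\,\mathrm{diag}(T_z^\ast,M_{\bar\xi}^{\bigtriangleup})|_{\mathcal K_\te}$; applied to $\kf{\ID_z}{0}$ this equals $P_\te\kf{T_z^\ast 1}{0}$, and $T_z^\ast$ is the backward shift, which kills the constant $1$. Hence $T_\te^\ast\kf{\ID_z}{0}=0$ and $V\bar\xi=\kf{z^{-1}\te}{z^{-1}\bigtriangleup}$, which together with the first identity establishes \eqref{f-xy}. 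The subtle point running through the argument is the bookkeeping of the two perturbation vectors and the normalization $\Phi_1\kf{\ID_z}{0}=\ID_\xi$: it is this normalization that fixes the phase and forces the defect vectors to appear with coefficient exactly $1$, rather than merely up to a unimodular factor as the defect-space-to-defect-space property alone would give.
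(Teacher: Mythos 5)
Your proof is correct, but it follows a genuinely different route from the one in the paper. The paper's proof never invokes the normalization $\Phi_1\kf{\ID_z}{0}=\ID_\xi$; it works solely from the intertwining $T_\te V=VU_0$. Writing the defect identities $I-U_0U_0^\ast=\ID_\xi\ID_\xi^\ast$ and $I-U_0^\ast U_0=\bar\xi(\bar\xi)^\ast$ on $L^2(\mu)$, it deduces $(I-T_\te^\ast T_\te)V\bar\xi=V\bar\xi$, hence $T_\te V\bar\xi=\vec 0$, and analogously $T_\te^\ast V\ID_\xi=\vec 0$; it then computes $\Ker T_\te$ and $\Ker T_\te^\ast$ explicitly in the two-component model space (showing each is one-dimensional and spanned by $\kf{z^{-1}\te}{z^{-1}\bigtriangleup}$ and $\kf{\ID_z}{0}$ respectively, using $\te(0)=0$ and orthogonality to $\kr H^2$), and finally fixes the constants by comparing norms. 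You instead take the spectral-representation normalization as definitional, so $V\ID_\xi=\kf{\ID_z}{0}$ is immediate, and you obtain the other defect vector from the one-line computation $V\bar\xi=\widetilde U_1^\ast\kf{\ID_z}{0}=T_\te^\ast\kf{\ID_z}{0}+\kf{z^{-1}\te}{z^{-1}\bigtriangleup}$, with $T_\te^\ast$ killing $\kf{\ID_z}{0}$ because its first component acts as the backward shift. Your version is shorter, avoids the kernel computations entirely, and---as you correctly flag---is what actually pins down the constants: the paper's norm argument by itself only yields $|c_1|=|c_2|=1$ (its assertion that $c_1,c_2\in\R$ and hence equal $1$ is not forced by the intertwining alone), so your remark that the normalization fixes the phase is a genuine gain in precision. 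What the paper's route buys is independence from the choice of normalization---it uses only the $T_\te$/$U_0$ intertwining, which is the property of $\Phi_\gamma$ stated in Section \ref{s-ACmeas} and used in the induction proving \eqref{f-posn}---plus the by-product that both kernels are exactly one-dimensional. One caveat on yours: since you define $\Phi_1$ as the spectral representation of $\widetilde U_1$, the intertwining $T_\te V=VU_0$ used elsewhere in the appendix is no longer an input but a consequence of your lemma (substituting $V\bar\xi=\kf{z^{-1}\te}{z^{-1}\bigtriangleup}$ into the rank one perturbation formula gives $\Phi_1T_\te\Phi_1^\ast=M_\xi-(\fdot,\bar\xi)\ID_\xi=U_0$), and you should state that implication explicitly so that the rest of the appendix remains available under your convention.
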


\begin{proof}[Proof of Lemma \ref{l-VKernels}]
Let us use the notation $ b   b ^*= b  ( \fdot,  b )$ for $b\in L^2(\mu)$. Observe that
\begin{align}\label{f-defU}
I-U_0U_0^*= \ID_\xi \ID_\xi^* ,
\qquad
I-U_0^* U_0= \bar\xi (\bar\xi)^*
\qquad\text{on }L^2(\mu).
\end{align}
Indeed, by definition of $U_0$ we obtain
\[
 U_0= M_\xi - \ID_\xi(\bar \xi)^*= (I-\ID_\xi\ID_\xi^*)M_\xi
\qquad\text{and}
\qquad
U_0^*= M_{\bar\xi}(I-\ID_\xi\ID_\xi^*).
\]

Let us show that $T_\te V\bar\xi=\vec{0}$. Recall that $V:L^2(\mu)\to K_\te$ is a unitary operator such that $T_\te V= V U_0$. Since we assumed that $\|\bar\xi\|\ci{L^2(\mu)}=\mu(\T)=1$, we have $
\|V\bar\xi\|\ci{K_\te}=1$. So the second identity of \eqref{f-defU} implies that
\begin{align*}
(I-T_\te^* T_\te)V\bar\xi = V\bar\xi.
\end{align*}
Hence 
$V\bar\xi$ is the kernel of 
$T_\te^* T_\te$
. Because $(T_\te^* T_\te V\bar\xi, V\bar\xi)=\|T_\te V\bar\xi\|^2$ we have $T_\te V\bar\xi=\vec 0$.

In analogy, it follows that $T_\te^*V\ID_\xi=\vec{0}$.

Note that the kernels of $T_\te$ and $T_\te^*$ are one dimensional, since the definciency indicees of $U_0$ were $(1,1)$.

Let us show that $V\ID_\xi=c_1\kf{\ID_z}{0}$, $c_1\in\R$. Take $\vec h=\kg\in K_\te$ such that $T_\te^* \vec h=\vec 0$. Notice that $T_\theta^\ast  =  z^{-1}\kf{f-f(0)}{g}$ where the second component is zero only if $ z^{-1} g\in \bigtriangleup H^2$. It follows that $g\in \bigtriangleup H^2$. On the other hand, since $\vec h \in K_\te$, we have $g\perp \bigtriangleup H^2$. So we must have $g\equiv 0$. In the first component, we obtain $f-f(0)\in \te H^2$ while $f\perp \te H^2$. From $(f-f(0),f)=0$, it easily follows that $f=f(0)$ is a constant function. The vector $\vec h=\kf{\ID_z}{0}$ is the only vector such that  $T_\te^*\vec h=\vec 0$ and must hence be in $K_\te$.

With the definition of $K_\te$ it is easy to see that $V\bar\xi=c_2\kf{z^{-1} \te}{z^{-1} \bigtriangleup}$, $c_2\in\R$. Indeed, if $T_\te\kg=\vec{0}$, then we must have $zf\in\te H^2$ while $f\perp \te H^2$, and similarly for the second component. Because $\te(0)=0$, it follows that vector $\kf{z^{-1} \te}{z^{-1} \bigtriangleup}\in K_\te$ satisfies these conditions.

It remains to observe that $\left\|\kf{\ID_z}{0}\right\|\ci{K_\te}=\left\|\kf{ z^{-1} \te}{ z^{-1} \bigtriangleup}\right\|\ci{K_\te}=1$, so that $c_1=c_2=1$.
\end{proof}

The representation theorem, Theorem \ref{t-repr}, is proved.\hfill\qed

\providecommand{\bysame}{\leavevmode\hbox to3em{\hrulefill}\thinspace}
\providecommand{\MR}{\relax\ifhmode\unskip\space\fi MR }
\providecommand{\MRhref}[2]{%
  \href{http://www.ams.org/mathscinet-getitem?mr=#1}{#2}
}
\providecommand{\href}[2]{#2}

\end{document}